\def\theenumi{\roman{enumi}}
\def\p@enumi{\theenumi\theenumi}
\newtheorem{prop}{Proposition}
\newtheorem{cor}{Corollary}
\newtheorem{thm}{Theorem}
\theoremstyle{definition}
\def\QED{\hfill\vrule height .9ex width .8ex depth -.1ex \bigskip}
\newcommand{\new}[1]{{\emph {#1}}}
\newcommand{\aut}{{\sf Aut}}
\newcommand{\sym}{{\sf Sym}}
\newcommand{\F}{\mathbb{F}}
\newcommand{\Z}{\mathbb{Z}}
\newcommand{\New}[1]{{\it #1}}
\newcommand{\bC}{{\bf C}}
\newcommand{\sg}[1]{\langle {#1}\rangle}
\newcommand{\normaleq}{\trianglelefteq}
\newcommand{\comment}[1]{}
\newcommand{\cC}{{\mathcal C}}
\def\f{\EuScript}
\newcommand{\cD}{{\mathcal D}}
\newcommand{\paut}{{\sf PAut}}
\newcommand{\cay}{{\sf Cay}}
\begin{document}

\title{A solution of an equivalence problem for semisimple cyclic codes}
\author{Mikhail Muzychuk}
\address{Netanya Academic College, Netanya, Israel}
\email{muzy@netanya.ac.il}

\date{}

\begin{abstract}
In this paper we propose an efficient solution of an equivalence problem for semisimple
cyclic codes.
\end{abstract}

\maketitle

\section{Introduction}

Recall that an $[n,k]_q$ code is a $k$-dimensional subspace $\cC$ of $\F_q^n$.
Two $[n,k]_q$ codes $\cC,\cD\leq\F_q^n$ are called \New{(permutation) equivalent }, notation $\cC\sim\cD$, if one of them may be obtained from another one by permutting the coordinates. A linear code is called \New{cyclic} if it is invariant under a cyclic shift of the coordinates. An equivalence problem for cyclic codes, and, more generally, an isomorphism problem for arbitrary cyclic objects, was studied by many authors during last three decades 
\cite{babai,palfy,klp1,phelps,brand,HJP,evdopon,li2,dobson,mu99} - to mention a few. 
In the paper \cite{HJP} Huffman, Job and Pless  completely solved an isomorohism problem for arbitrary cyclic combinatorial objects on $p^2$ points where $p$ is a prime. The solution was  given in terms of \New{generalized multipliers} and was generalized in \cite{mu04} via a notion of a \New{solving set}: a set of permutations $S$ is called a solving set for a class of cyclic objects when two cyclic objects from the class are equivalent  if and only if they are equivalent by a permutation from $S$. It was shown in \cite{mu04} that there exists a solving set for colored circulant digraphs of order $n$ of cardinality $O(n^2\varphi(n))$. Moreover this set may be efficiently constructed from $n$ without any additional information.  The main result of this paper states that a solving set
constructed in \cite{mu04} is also good for cyclic semisimple codes. To formulate precise results we need more definitions.

Each cyclic code of length $n$ over finite field $\F_q$ may be considered as an ideal in the group algebra $\F_q[H]$ of a cyclic group $H$ of order $n$.  Notice that if $H$ is an arbitrary group, then a \New{group code} is an arbitrary right ideal of $\F_q[H]$.
In what follows we write $I\normaleq \F_q[H]$ to designate the fact that $I$ is a right ideal of $\F_q[H]$. A group code $I\normaleq \F_q[H]$ is called \New{semisimple} if the group algebra $\F_q[H]$ is semisimple, that is $\gcd(q,|H|)=1$. An automorphism group of a group code always contains a subgroup $H_R$ consisting of right tranlsations by the elements of $H$. This group acts regularly on the coordinates of a group code. Thus a group code is a particular case of a \New{Cayley combinatorial object} introduced by L.Babai \cite{babai}. Recall that a Cayley combinatorial object over a group $H$ is any relational structure on $H$ invariant under the group $H_R$.
Let $\f K$ be a class of Cayley combinatorial objects over the group $H$. Two objects from ${\f K}$ are called \New{Cayley isomorphic} if there exists an automorphism $H$ which maps one of them onto another. An object $K\in{\f  K}$ is called a \New{CI-object} if any $K'\in{\f K}$ isomorphic to $K$ is also Cayley isomorphic to $K$.  A group $H$ is called a \New{CI-group} with respect to the class $\f K$ if any object $K\in{\f K}$ is a CI-object. Two classes of Cayley objects are essential for this paper: group codes and \New{colored Cayley digraphs}. 

A colored Cayley digraph over a finite group $H$ is a pair $(H,\phi)$ where $\phi:H\rightarrow C$ is a function to the set of colors $C$. An arc $(x,y)\in H\times H$ is colored by a color $\phi_{xy^{-1}}$. We denote the corresponding colored Cayley digraph as $\cay(H,\phi)$. An isomorphism between two colored Cayley digraphs is defined in a natural way (see the next section where all related definitions are given). In the case when a coloring set $C$ is a finite field we identify the coloring function $\phi:H\rightarrow \F_q$ with an element of a group algebra $\sum_{h\in H}\phi(h)h$.

Now we are able to formulate the  main result of the paper.
\begin{thm}\label{main2} Let $H$ be a cyclic group of order $n$ written multiplicatively. Let $I\normaleq\F_q[H]$ be a semisimple cyclic code over $H$ generated by the idempotent $e = \sum_{h\in H} e_h h\in\F_q[H]$.  Then a solving set for $\cay(H,e)$ is a solving set for $I$.
\end{thm}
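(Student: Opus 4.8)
The bridge between the two equivalence problems is the observation that the code $I$ is exactly the $\F_q$-row space of the circulant matrix $M=(e_{xy^{-1}})_{x,y\in H}$ attached to the idempotent, and that $M$ is precisely the color matrix of $\cay(H,e)$. Indeed, $I=e\F_q[H]$ is spanned by the shifts $\{eh:h\in H\}$, whose coordinate vectors are the rows of $M$; moreover $M^2=M$ because $e^2=e$ and circulant multiplication realizes the algebra product, so $M$ is the unique $H_R$-invariant idempotent projector onto $I$ (its kernel being the complementary ideal $(1-e)\F_q[H]$, which is canonical by semisimplicity). I would record this correspondence $I\leftrightarrow M\leftrightarrow\cay(H,e)$ first, since it reduces everything to a statement comparing permutation equivalence of codes with permutation isomorphism of colored circulant digraphs.

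The plan is then to prove the two implications separately. The easy implication is that a digraph isomorphism is automatically a code equivalence realized by the same permutation: if $\sigma$ is an isomorphism $\cay(H,e)\to\cay(H,e')$ then, reading off colors, $M'=P_\sigma^{-1}MP_\sigma$, where $M'$ is the color matrix of $\cay(H,e')$ and $P_\sigma$ is the permutation matrix of $\sigma$; taking row spaces, using that left multiplication by the invertible $P_\sigma^{-1}$ does not change the row space, and that coordinate permutation acts as right multiplication by $P_\sigma$, gives $I'=\mathrm{RowSpace}(M')=\mathrm{RowSpace}(M)P_\sigma=\sigma(I)$. This is the ingredient that converts a digraph isomorphism found in $S$ into a code equivalence lying in $S$.

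The hard implication, and the main obstacle, is the converse: a code equivalence $\sigma(I)=I'$ must force $\cay(H,e)\cong\cay(H,e')$, even though a code equivalence need not conjugate $M$ to $M'$ (the projector $P_\sigma^{-1}MP_\sigma$ lands on $I'$ but is invariant under $\sigma^{-1}H_R\sigma$ rather than under $H_R$, so it is generally not $M'$). My approach here is Babai's lemma: the equality $\sigma(I)=I'$ produces two regular cyclic subgroups $H_R$ and $\sigma^{-1}H_R\sigma$ inside $\aut(I)$ (equivalently $H_R$ and $\sigma H_R\sigma^{-1}$ inside $\aut(I')$), and $\sigma$ transports the $\sigma^{-1}H_R\sigma$-Cayley picture of $I$ onto the $H_R$-Cayley picture $\cay(H,e')$ of $I'$. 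What remains is to compare the $H_R$- and the $\sigma^{-1}H_R\sigma$-pictures of the single code $I$, i.e.\ to show that the two regular cyclic subgroups yield isomorphic colored Cayley digraphs of the idempotent. This is exactly where the structure theory behind \cite{mu04} has to be invoked: since $H_R\le\aut(I)$, the $2$-closure of $\aut(I)$ is a cyclic Schur ring, the \emph{exotic} (non-multiplier) code equivalences are governed by its wreath/tensor decompositions, and a generalized multiplier normalizing this Schur ring sends the idempotent to the idempotent and hence induces the required digraph isomorphism. Controlling these non-affine equivalences, and checking that they respect the idempotent's coloring, is where essentially all the difficulty lies.

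Finally I would assemble the pieces. Combining the two implications gives $I\sim I'\iff\cay(H,e)\cong\cay(H,e')$. If $S$ is a solving set for $\cay(H,e)$ and $I\sim I'$, then the digraphs are isomorphic, so some $\sigma\in S$ realizes $\cay(H,e)\to\cay(H,e')$; by the easy implication this same $\sigma$ satisfies $\sigma(I)=I'$. The reverse inclusion being trivial, $S$ is a solving set for $I$, which is the assertion of Theorem \ref{main2}.
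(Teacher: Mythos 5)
Your reduction has the right shape---the circulant idempotent projector $A_H(e)$ is indeed the bridge, and your ``easy implication'' (an isomorphism $\cay(H,e)\to\cay(H,e')$ conjugates the projectors and hence carries $I$ to $I'$) is correct and is implicit in the paper as well. But the ``hard implication,'' that a code equivalence $I^\sigma=I'$ forces $\cay(H,e)\cong\cay(H,e')$, is the entire content of the theorem, and you do not prove it: you defer it to ``the structure theory behind \cite{mu04}'' (Schur rings, wreath/tensor decompositions, generalized multipliers) without carrying out any of that analysis. This is a genuine gap, not a routine verification. Note in particular that $\paut(I)$ itself need not be $2$-closed---the paper only shows that subgroups of $\paut(I)$ of order coprime to $q$ have their $2$-closures inside $\paut(I)$---so the assertion that ``the $2$-closure of $\aut(I)$ is a cyclic Schur ring'' does not by itself control the exotic equivalences of the \emph{code}.

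The paper closes this gap by a different and more economical route, whose key external input you never invoke: Theorem~1.8 of \cite{mu99} (CFSG-based) supplies a solvable group $F$ with $H_R\leq F\leq\paut(I)$ controlling fusion of $H_R$ in $\paut(I)$; a Hall $\pi$-subgroup $F_\pi$ of $F$ containing $H_R$ (with $\pi$ the set of primes dividing $n$) still controls fusion and has order coprime to $q$, so Maschke's theorem yields an $F_\pi$-invariant complement to $I$ and hence a projector $E$ commuting with all of $F_\pi\supseteq H_R$; therefore $E=A_H(e)$ is circulant, and $\aut(\cay(H,e))=\bC_{\sym(H)}(A_H(e))\supseteq F_\pi$ controls fusion of $H_R$ in $\paut(I)$. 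The generalized Babai lemma (Theorem~\ref{ssets}) then transfers any solving set for $\cay(H,e)$ to $I$, with no case analysis of Schur rings or generalized multipliers. Your biconditional $I\sim I'\iff\cay(H,e)\cong\cay(H,e')$ is in fact a \emph{consequence} of this argument, not an available starting point. (A minor additional slip: with the paper's conventions $I=e\F_q[H]$ is the column space of $A_H(e)$, not its row space; for abelian $H$ this only differs by the inversion permutation, but it should be stated correctly.)
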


It was shown in \cite{mu04} that a solving set $S_e$ for a given colored Cayley digraph $\cay(\Z_n,e)$ over a cyclic group of order $n$ contains at most $\varphi(n)$ permutations. This set depends only on a partition ${\f P}_e$ of $H$ constructed from the coloring $e$ in the following way: two elements $a,b\in H$ belong to the same class of ${\f P}_e$ whenever $e_a = e_b$. Once a partition ${\f P}_e$ is built, a construction of the related solving set requires $O(n^2)$ arithmetic operations in the ring $\Z_n$. When the set $S_e$ is produced an equivalence testing becomes rather simple: a cyclic code $J\normaleq\F_q[H]$ is equivalent to the code $I=e\F_q[H]$ if and only if it is equivalent by a permutation from $S_e$.  This gives a simple algorithm for a code equivalence testing which is polynomial in $n$ and $q$.

It was shown in \cite{mu93} \cite{mu97} that a cyclic group of a square-free or twice square free order is a CI-group with respect to colored Cayley digraphs. In this case
Theorem~\ref{main2} implies the following
\begin{thm}\label{main3} A cyclic group of a square-free or twice square-free order is a CI-group with respect to semisimple cyclic codes.
\end{thm}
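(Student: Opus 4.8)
The plan is to deduce Theorem~\ref{main3} from Theorem~\ref{main2} by exhibiting, for each semisimple cyclic code, a solving set that consists entirely of group automorphisms. The key observation is that $H$ being a CI-group with respect to a class $\f K$ is exactly the assertion that the automorphism group $\aut(H)$ serves as a solving set for $\f K$: two objects of $\f K$ are isomorphic if and only if some element of $\aut(H)$ carries one onto the other, which is precisely the statement that every isomorphism can be upgraded to a Cayley isomorphism. Thus it suffices to show that $\aut(H)$ is a solving set for every semisimple cyclic code $I\normaleq\F_q[H]$.

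First I would invoke the results of \cite{mu93} and \cite{mu97}: when $n$ is square-free or twice square-free, $H$ is a CI-group with respect to colored Cayley digraphs. Unwinding the definition, this says that for the idempotent coloring $e$ the group $\aut(H)$ is itself a solving set for $\cay(H,e)$. Indeed, whenever a colored Cayley digraph $\cay(H,\phi)$ is isomorphic to $\cay(H,e)$, the CI-property supplies a group automorphism realizing that isomorphism, and conversely every element of $\aut(H)$ is an isomorphism of colored Cayley digraphs. Hence among the solving sets for $\cay(H,e)$ we may single out $\aut(H)$.

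Now I would apply Theorem~\ref{main2}, which guarantees that a solving set for $\cay(H,e)$ is a solving set for the code $I=e\F_q[H]$. Feeding in the solving set $\aut(H)$ produced in the previous step, I conclude that $\aut(H)$ is a solving set for $I$. Consequently, if $J\normaleq\F_q[H]$ is any cyclic code equivalent to $I$, then the equivalence $I\sim J$ is witnessed by some $\sigma\in\aut(H)$, i.e.\ $\sigma(I)=J$; since $\sigma$ is a group automorphism this is exactly a Cayley isomorphism of codes. As $I$ ranges over all semisimple cyclic codes, every such code is a CI-object, which is the content of Theorem~\ref{main3}.

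The step that requires the most care is the translation between CI-properties and solving sets, together with the verification that the solving set furnished by Theorem~\ref{main2} may indeed be taken inside $\aut(H)$ rather than inside the full symmetric group. Theorem~\ref{main2} asserts that \emph{any} solving set for $\cay(H,e)$ works for $I$, so the genuine input is the freedom, granted by the digraph CI-property, to choose that solving set to lie in $\aut(H)$. One should also confirm that a group automorphism $\sigma$ of $H$ acts on $\F_q[H]$ as an algebra automorphism, so that $\sigma(I)$ is again a right ideal and the notion of Cayley isomorphism of codes is well defined; this is routine.
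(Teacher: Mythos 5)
Your proposal is correct and follows essentially the same route as the paper: the author likewise derives Theorem~\ref{main3} directly from Theorem~\ref{main2} together with the results of \cite{mu93} and \cite{mu97}, via the observation (implicit in the paper, made explicit by you) that the CI-property for a class is equivalent to $\aut(H)$ being a solving set for each object in it. The only difference is that the paper leaves this translation unstated, whereas you spell it out.
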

The proof of Theorem~\ref{main2} is based on the results of \cite{mu99} which
were obtained using the classification of finite simple groups (CFSG). It would be nice to find a classification-free proof of this result. Notice that if $n$ is a prime power, then the CFSG is not needed. Also for non-cyclic $p$-groups we have additional results. 

\begin{thm}\label{main1} Let $H$ be a $p$-group, $p$ a prime. Then any solving set for colored Cayley digraphs over $H$ is a solving set for semisimple group codes over $H$. In particular, if $H$ is a CI-group with respect to colored Cayley digraphs, then it is a CI-group with respect to semisimple codes.
\end{thm}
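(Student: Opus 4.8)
The plan is to translate a semisimple group code into a colored Cayley digraph at the level of operators on $\F_q[H]$, and then to show that code equivalence is always detected by digraph equivalence. I would identify the generating idempotent $e=\sum_h e_h h$ with the left–multiplication operator $L_e$ on $\F_q[H]$, whose matrix in the basis $H$ is $(L_e)_{x,y}=e_{xy^{-1}}$; this is exactly the colored adjacency matrix of $\cay(H,e)$. Two observations then drive everything. First, $I=e\F_q[H]=\im L_e$, and $\sigma\in\sym(H)$ preserves the coloring of $\cay(H,e)$ if and only if $\sigma$ centralizes $L_e$; hence $\aut(\cay(H,e))=\fC_{\sym(H)}(L_e)\subseteq\aut(I)$. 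Second, if $\sigma L_e\sigma^{-1}=L_f$ for idempotents $e,f$, then $I^\sigma=\im L_f=J$. Consequently, once equivalent codes are shown to yield isomorphic digraphs for appropriate generating idempotents, a solving set $S$ for colored Cayley digraphs over $H$ is automatically a solving set for the codes: given $I\cong J$ and such idempotents $e,f$, a witness $\sigma\in S$ for $\cay(H,e)^\sigma=\cay(H,f)$ is, by the second observation, a witness for $I^\sigma=J$.

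So the problem reduces to one implication: if $I\cong J$ as codes then there are idempotents $e$ of $I$ and $f$ of $J$ with $\cay(H,e)\cong\cay(H,f)$. I would fix $\tau$ with $I^\tau=J$. Since $\aut(J)=\tau\aut(I)\tau^{-1}$ and $H_R\le\aut(J)$, the subgroup $R:=\tau^{-1}H_R\tau$ is a second regular subgroup of $\aut(I)$ isomorphic to $H$. Writing a candidate isomorphism as $\sigma=\tau a$ with $a\in\aut(I)$, so that $I^\sigma=J$ holds automatically, the requirement that $\sigma L_e\sigma^{-1}$ again commute with $H_R$, i.e. be of the form $L_f$ with $f$ an idempotent generating $J$, becomes the single condition $a^{-1}Ra\subseteq\fC_{\sym(H)}(L_e)=\aut(\cay(H,e))$. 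Thus it suffices to conjugate $R$, inside $\aut(I)$, into the digraph group $\aut(\cay(H,e))$ for a suitably chosen idempotent $e$.

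This conjugation is the heart of the matter, and it is exactly where the $p$-group hypothesis enters. Put $G=\aut(I)$ and choose $P\in\syl_p(G)$ with $H_R\le P$; as $R$ is a $p$-group, Sylow's theorem gives $g\in G$ with $g^{-1}Rg\le P$. Because $\gcd(q,|H|)=1$ forces $\mathrm{char}\,\F_q\neq p$, the algebra $\F_q[P]$ is semisimple, so by Maschke's theorem the $P$-submodule $I\subseteq\F_q[H]$ has a $P$-invariant complement. The projection onto $I$ along it commutes with every element of $P$, in particular with $H_R$; since an $\F_q$-endomorphism of $\F_q[H]$ commuting with the right regular action $H_R$ is left multiplication by an element of $\F_q[H]$, this projection equals $L_e$ for an idempotent $e$ with $e\F_q[H]=I$, and commuting with all of $P$ gives $P\subseteq\fC_{\sym(H)}(L_e)=\aut(\cay(H,e))$. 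Hence $g^{-1}Rg\le\aut(\cay(H,e))$, so $a=g$ meets the condition above and $\sigma=\tau g$ yields $\cay(H,e)\cong\cay(H,f)$. I expect this Sylow-plus-Maschke step to be the main obstacle conceptually: it works precisely because $H_R$ and $\tau^{-1}H_R\tau$ are $p$-subgroups lying in a common Sylow $p$-subgroup, a luxury unavailable for general composite $|H|$, which is what forces the CFSG-based input of \cite{mu99} in the cyclic case of Theorem~\ref{main2}.

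Finally, the ``in particular'' clause follows by specializing $S$. If $H$ is a CI-group with respect to colored Cayley digraphs, then the isomorphism $\cay(H,e)\cong\cay(H,f)$ produced above is realized by some $\alpha\in\aut(H)$; since $\alpha$ is an algebra automorphism we get $L_{\alpha(e)}=\alpha L_e\alpha^{-1}=L_f$, whence $I^\alpha=J$ with $\alpha\in\aut(H)$. Thus equivalent semisimple codes are Cayley isomorphic, i.e. $H$ is a CI-group with respect to semisimple codes.
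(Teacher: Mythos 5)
Your proposal is correct and follows essentially the same route as the paper: a Sylow $p$-subgroup $P\geq H_R$ of $\paut(I)$ controls fusion of $H_R$ (your conjugation of $R=\tau^{-1}H_R\tau$ into $P$ is exactly this), Maschke's theorem yields a $P$-invariant complement whose projector commutes with $H_R$ and is therefore left multiplication $A_H(e)$ by an idempotent $e$ generating $I$, and then $P\leq\aut(\cay(H,e))\leq\paut(I)$ transfers the problem to the colored Cayley digraph. The only difference is presentational: you inline the argument of the paper's Theorem~\ref{ssets} (producing the second idempotent $f$ and the witness in $S$ explicitly) rather than quoting it as a black box.
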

It was shown in \cite{himu} that an elementary abelian group of rank at most four is a CI-group with respect to colored digraphs. This implies the following
\begin{cor}An elementary abelian group of rank at most four is a CI-group with respect to semisimple group codes over this group.
\end{cor}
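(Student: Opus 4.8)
The plan is to deduce the statement directly from Theorem~\ref{main1} combined with the CI-result of \cite{himu}, treating the corollary as the simplest concrete instance already covered by the $p$-group machinery set up above. There is essentially no new mathematics to produce; the work is to line up hypotheses.

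First I would record the trivial but essential observation that an elementary abelian group $H$ of rank $r\le 4$, being isomorphic to a direct power of $\Z/p\Z$ for some prime $p$, is in particular a $p$-group. This places $H$ within the hypotheses of Theorem~\ref{main1}, whose conclusion is conditional only on $H$ being a $p$-group together with the extra assumption, in its \emph{in particular} clause, that $H$ be a CI-group with respect to colored Cayley digraphs.

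The second step is to supply exactly that extra assumption. This is the content of \cite{himu}, where an elementary abelian group of rank at most four is shown to be a CI-group with respect to colored digraphs. The one point that genuinely needs checking is that the class of objects treated in \cite{himu} coincides with the class of colored Cayley digraphs $\cay(H,\phi)$ of the present paper: in both cases one has a complete digraph on the vertex set $H$ in which the arc $(x,y)$ receives a color depending only on $xy^{-1}$, and in both the CI-property is taken relative to Cayley isomorphisms, so the two notions agree. With both hypotheses of Theorem~\ref{main1} in hand, its \emph{in particular} clause immediately yields that $H$ is a CI-group with respect to semisimple group codes, which is the assertion of the corollary.

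The hard part, to the extent there is one, is not in this deduction at all but is already discharged inside Theorem~\ref{main1}, whose proof carries out the solving-set reduction from an arbitrary semisimple code $I\normaleq\F_q[H]$ to the single colored Cayley digraph $\cay(H,e)$ attached to its generating idempotent $e$. I expect no further obstacle here beyond the bookkeeping of the previous paragraph; in particular, since $H$ is a $p$-group, the argument for the corollary incurs no appeal to the classification of finite simple groups.
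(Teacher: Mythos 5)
Your proposal is correct and matches the paper's own reasoning exactly: the corollary is obtained by noting that an elementary abelian group of rank at most four is a $p$-group, citing \cite{himu} for the CI-property with respect to colored Cayley digraphs, and then applying the \emph{in particular} clause of Theorem~\ref{main1}. Nothing further is needed.
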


{\bf Notation.} Throughout the paper $\Omega$ denotes a finite set and $\F_q$ stands for a finite field with $q$ elements. The set of all functions from $\Omega$ to $\F_q$ is denoted as $\F_q^\Omega$. The elements of $\F_q^\Omega$ are considered either as functions or column vectors the coordinate positions of which are labelled by the elements of $\Omega$. For $f\in\F_q^\Omega$ we denote the $\omega$-th coordinate of $f$ either by $f(\omega)$ or $f_\omega$. The algebra ${\sf End}(\F_q^\Omega)$ is identified with the matrix algebra $M_\Omega(\F_q)$. The symmetric group of the set $\Omega$ is denoted by $\sym(\Omega)$. Given a permutation $g\in\sym(\Omega)$, we write $P_g$ for a permutation matrix corresponding to $g$. Notice that $P_g\in M_\Omega(\F_q)$.

\section{Preliminaries}

\subsection{Linear codes} In order to treat linear codes as combinatorial objects over finite set $\Omega$ we consider codes as linear subspaces of $\F_q^\Omega$. 
If $g\in\sym(\Omega)$, then $f^g(\omega):=f(\omega^{g^{-1}})$. Recall that two codes $\cC,\cD\leq\F_q^n$ are (permutation) equivalent if there exists $g\in\sym(\Omega)$ with $\cC^g = \cD$. An \New{automorphism} group of a code $\cC$, notation $\paut(\cC)$, consists of those $g\in\sym(\Omega)$ which satisfy $\cC^g = \cC$. A code $\cC$ is called cyclic if $\paut(\cC)$ contains a full cycle.

\subsection{Colored digraphs} Let $\Omega$ and $F$ be finite sets.  An $F$-\new{colored digraph} is a pair $\Gamma = (\Omega,c)$ where $c$ is a function $c:\Omega\times\Omega\rightarrow F$.  An adjacency matrix of $\Gamma$, $A(\Gamma)\in M_\Omega(F)$, is defined in a natural way
 $A(\Gamma)_{\omega,\omega'} = c(\omega,\omega')$. Two $F$-colored graphs $(\Omega,c)$ and $(\Omega,d)$ are isomorphic if there exists a permutation $g\in\sym(\Omega)$ such that $d(\alpha^g,\beta^g)=c(\alpha,\beta)$ for each pair $\alpha,\beta\in\Omega$. An automorphism group $\aut(\Gamma)$ consists of all isomorphisms from $\Gamma$ to itself, that is
$$g\in\aut(\Gamma)\iff \forall_{\alpha,\beta\in\Omega}\ c(\alpha,\beta) = c(\alpha^g,\beta^g).$$
If $F$ is a field, then $\aut(\Gamma)$ consists of all permutations $g\in\sym(\Omega)$ satisfying  $P_gA(\Gamma) = A(\Gamma) P_g$. Thus $\aut(\Gamma)$ coincides with the centralizer of $A(\Gamma)$ in $\sym(\Omega)$, i.e., $\aut(\Gamma) = \bC_{\sym(\Omega)}(A(\Gamma))$.

Let $H$ be a finite group and $F$ an arbitrary field. Recall that a colored Cayley digraph $\cay(H,e)$ defined by an element $e=\sum_{h\in H} e_h h \in F[H]$ has $H$ as a vertex set and an arc-coloring is defined by a function $(x,y)\mapsto e_{xy^{-1}},x,y\in H$. It's adjacency matrix will be denoted as $A_H(e)$. Clearly that $(A_H(e))_{xy} = e_{xy^{-1}}$. The set of all matrices $A_H(e),e\in F[H]$ form a subalgebra of the full matrix algebra $M_H(F)$. This subalgebra is isomorphic to the group algebra $F[H]$. Let us call matrices of the form $A_H(e)$ as $H$-\New{matrices}. Each $H$-matrix commutes with any permutation from $H_R$. Vice versa, any matrix from $M_H(F)$ which commutes with all permutations from $H_R$ is an $H$-matrix. Thus the algebra of $H$-matrices is the centralizer of $H_R$ in the full matrix algebra $M_H(F)$. 
\subsection{$2$-closed permutation groups \cite{wi2}.}
Any subgroup $G\leq \sym(\Omega)$  acts naturally on a product $\Omega\times\Omega$
as follows $(\alpha,\beta)^g:=(\alpha^g,\beta^g)$.  The orbits of this faithful
action are called $2$-\New{orbits} of $G$. The set of all $2$-orbits will be denoted as
$\Omega^2/G$. Two subgroups $G,F\leq\sym(\Omega)$ are called $2$-\New{equivalent}, notation
$G \sim_2 F$  if
$\Omega^2/G = \Omega^2/F$. The relation $\sim_2$ is an equivalence relation on the set of
all subgroups of $\sym(\Omega)$. For a given subgroup $G\leq\sym(\Omega)$ we define its $2$-closure
$G^{(2)}$ as the subgroup generated by all subgroups $2$-equivalent to $G$, that is
$$
G^{(2)}:=\sg{F\,|\, F\sim_2 G}.
$$
Notice that $G\sim_2 G^{(2)}$ and $G^{(2)} = F^{(2)}$ if and only if $G\sim_2 F$.
The operator $G\mapsto G^{(2)}$ satisfies the usual properties of a closure operator.
Notice that an intersection of two $2$-closed groups is also $2$-closed. The connection between colored digraphs and $2$-closed permutation groups is given in the statement below which is a direct consequence of Theorem 5.23 \cite{wi2} (see also Section 7.12 in \cite{KRRT})
\begin{thm}\label{2closed} An automorphism group of a colored digraph is $2$-closed. Vice versa, any $2$-closed permutation group is an automorphism group of a colored digraph.
\end{thm}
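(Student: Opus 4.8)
My plan is to reduce both halves of the statement to a single structural fact: for an arbitrary subgroup $G\leq\sym(\Omega)$, the $2$-closure $G^{(2)}$ equals the setwise stabilizer
\[
G^{*}:=\{g\in\sym(\Omega)\mid \Delta^{g}=\Delta\ \text{for every}\ \Delta\in\Omega^2/G\}
\]
of the partition of $\Omega\times\Omega$ into $2$-orbits of $G$. First I would verify that $G^{*}$ is a subgroup with $G\leq G^{*}$ and that $G^{*}\sim_2 G$: the inclusion is immediate from the definition of a $2$-orbit, and for the equality of the two $2$-orbit partitions I would note that $G\leq G^{*}$ forces every $2$-orbit of $G^{*}$ to be a union of $2$-orbits of $G$, while the defining property of $G^{*}$ forbids distinct $2$-orbits of $G$ from being merged.

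Granting this, $G^{(2)}=G^{*}$ would follow formally. On one side, $G^{*}\sim_2 G$ exhibits $G^{*}$ as one of the subgroups generating $G^{(2)}$, so $G^{*}\leq G^{(2)}$. On the other side, every $F\sim_2 G$ has precisely the $2$-orbits of $G$ and hence stabilizes each of them setwise, giving $F\leq G^{*}$; since this holds for all generators, $G^{(2)}\leq G^{*}$. In particular a group is $2$-closed exactly when it coincides with the full stabilizer of its own $2$-orbit partition.

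For the \emph{vice versa} direction I would then start from a $2$-closed $G$ and color $\Omega\times\Omega$ canonically by the map $c$ sending each pair to the $2$-orbit that contains it, setting $\Gamma:=(\Omega,c)$. A permutation $g$ preserves $c$ if and only if it maps every $2$-orbit of $G$ into itself, so $\aut(\Gamma)=G^{*}=G^{(2)}=G$, as desired.

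For the first assertion I would take an arbitrary colored digraph $\Gamma=(\Omega,c)$, put $A:=\aut(\Gamma)$, and observe that every $2$-orbit of $A$ is monochromatic, because $A$ preserves $c$. Then any $g\in A^{*}$ stabilizes each $2$-orbit of $A$ and therefore sends every pair to a pair of the same color, so $g\in A$; combined with $A\leq A^{*}$ this yields $A=A^{*}=A^{(2)}$. The only genuinely delicate step, and the one I expect to be the main obstacle, is establishing that $G^{*}$ has \emph{exactly} the $2$-orbits of $G$ and not coarser ones; this is precisely where the setwise-stabilizer condition, as opposed to merely permuting the orbits, is essential, and everything else is routine bookkeeping.
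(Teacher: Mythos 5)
Your argument is correct and complete. The paper itself gives no proof of this statement---it is quoted as a direct consequence of Theorem 5.23 in Wielandt's lecture notes---so there is no in-text argument to compare against; what you have written is a valid self-contained substitute. The one genuinely necessary piece of work, which you correctly identify and carry out, is the identification $G^{(2)}=G^{*}$, where $G^{*}$ is the setwise stabilizer of the partition $\Omega^2/G$. This is needed because the paper defines $G^{(2)}$ as the subgroup generated by all subgroups $2$-equivalent to $G$, whereas Wielandt's formulation (and the one your two directions actually use) is the stabilizer description; your two inclusions ($G^{*}\sim_2 G$ hence $G^{*}\leq G^{(2)}$, and every $F\sim_2 G$ fixes each $2$-orbit of $G$ setwise hence lies in $G^{*}$) bridge the two definitions cleanly, and in passing justify the paper's unproved remark that $G\sim_2 G^{(2)}$. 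The step you flag as delicate---that $G^{*}$ has exactly the $2$-orbits of $G$ and no coarser ones---is handled correctly: containment $G\leq G^{*}$ refines one way, and the setwise-stabilizer condition prevents merging. Both applications (coloring $\Omega\times\Omega$ by $2$-orbits for the converse, and monochromaticity of $\aut(\Gamma)$-orbits for the forward direction) are routine once that lemma is in place, and you execute them correctly.
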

Each matrix $A\in M_\Omega(\F_q)$ is an adjacency matrix of an $\F_q$-colored digraph with vertex set $\Omega$. Therefore ${\bf C}_{\sym(\Omega)}(A)$ is a $2$-closed subgroup of $\sym(\Omega)$.

\section{Proof of main results}
Let $\cC\leq\F_q^\Omega$ be a linear code.
A \New{projector} onto $\cC$ is an endomorphism $E\in M_\Omega(\F_q)$
 such that $E^2 =E$ and ${\sf Im}(E)=\cC$. The latter condition is equivalent to
saying that the column space of $E$ coincides with $\cC$. Clearly that
${\sf Im}(E)\oplus{\sf Ker}(E)=\F_q^\Omega$. Notice that each projector onto $\cC$ is uniquely determined
by its kernel which is a subspace complementary to $\cC$. Given a subspace $\cC'$
complementary to $\cC$ in $\F_q^\Omega$,
one can define a projector $E$ onto $\cC$ by setting $Ev=v$ for $v\in\cC$ and $Ev=0$ for
$v\in\cC'$. So there is a one-to-one correspondence between projectors onto $\cC$ and
complements to $\cC$ in $\F_q^\Omega$. If a permutation matrix $P_g,g\in\sym(\Omega)$ commutes with $E$, then $\cC^g  = \cC$.
This implies the following
\begin{prop}\label{commute}
Let $E$ be a projector onto a code $\cC$. Then $\bC_{\sym(\Omega)}(E)\leq\paut(\cC)$.
\end{prop}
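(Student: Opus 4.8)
\proof
The plan is to unwind the two hypotheses on the projector $E$ — that it is idempotent and that its image is $\cC$ — and combine them with the commuting relation $P_gE = EP_g$. First I would recall the convention fixed in the preliminaries, namely that the permutation action on vectors is realized by (left) multiplication by $P_g$, so that $\cC^g = P_g\cC$; consequently it suffices to establish the single subspace identity $P_g\cC = \cC$ for each $g\in\bC_{\sym(\Omega)}(E)$. The one structural fact about a projector that I need is that it acts as the identity on its own image: if $v\in\cC=\im(E)$, then $v = Ew$ for some $w$, and hence $Ev = E^2w = Ew = v$.

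The heart of the argument is then a one-line computation. Taking an arbitrary $v\in\cC$ and using $Ev=v$ together with the hypothesis $EP_g = P_gE$, I would compute $E(P_gv) = (EP_g)v = (P_gE)v = P_g(Ev) = P_gv$. Thus $P_gv$ is a fixed vector of $E$, so it lies in $\im(E)=\cC$, which gives the inclusion $P_g\cC\subseteq\cC$.

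To upgrade this inclusion to an equality I would invoke the invertibility of the permutation matrix $P_g$: since $P_g$ is a bijective linear map, $\dim(P_g\cC)=\dim(\cC)$, and a subspace of $\cC$ of the same (finite) dimension must coincide with $\cC$. Therefore $\cC^g = P_g\cC = \cC$, i.e.\ $g\in\paut(\cC)$, and as $g$ was an arbitrary element of $\bC_{\sym(\Omega)}(E)$ this yields the claimed containment $\bC_{\sym(\Omega)}(E)\leq\paut(\cC)$.

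I do not expect a genuine obstacle here: the statement is essentially immediate once the conventions are pinned down. The only points requiring a little care are bookkeeping ones — correctly matching the permutation action with multiplication by $P_g$ on the appropriate side, and passing from the inclusion $P_g\cC\subseteq\cC$ to equality via invertibility rather than asserting equality outright. An alternative route that avoids the ``projector fixes its image'' observation is to argue directly with images, $P_g\cC = P_g\,\im(E) = \im(P_gE) = \im(EP_g) = \im(E) = \cC$, where the penultimate equality again uses that $P_g$ is surjective; this is equally short, and I would be content to present either version.
\QED
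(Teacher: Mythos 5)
Your proof is correct and is exactly the argument the paper has in mind: the paper does not even write out a proof, simply asserting in the preceding sentence that if $P_g$ commutes with $E$ then $\cC^g=\cC$, which is precisely the computation you carry out in detail (a projector fixes its image, hence $P_g\cC\subseteq\cC$, and invertibility of $P_g$ upgrades this to equality). No discrepancies to report.
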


\begin{thm}\label{2closed}
Let $G\leq\paut(\cC)$ be a subgroup of order coprime to $q$. Then $G^{(2)}\leq \paut(\cC)$.
\end{thm}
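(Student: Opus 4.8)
The plan is to manufacture a single projector onto $\cC$ that is centralized by all of $G$, and then to exploit the fact, recorded just before Proposition~\ref{commute}, that the centralizer of a matrix in $\sym(\Omega)$ is a $2$-closed subgroup of $\sym(\Omega)$.

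First I would bring in the coprimality hypothesis $\gcd(|G|,q)=1$. Since $G\leq\paut(\cC)$, we have $\cC^g=\cC$ for every $g\in G$, so $\cC$ is a submodule for the linear action of $G$ on $\F_q^\Omega$. Because $|G|$ is coprime to $q$, the group algebra $\F_q[G]$ is semisimple, and Maschke's theorem supplies a $G$-invariant complement $\cC'$, i.e. $\F_q^\Omega=\cC\oplus\cC'$ with $\cC'$ stable under $G$. This is the only point at which the coprimality assumption is genuinely needed, and I expect establishing this $G$-invariant complement to be the conceptual heart of the argument; everything after it is formal.

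Next I would pass to the projector $E$ onto $\cC$ determined by $\cC'$, using the one-to-one correspondence between projectors onto $\cC$ and complements of $\cC$ recalled in the text. Because both $\cC$ and $\cC'$ are $G$-invariant, each conjugate $P_gEP_g^{-1}$ is again a projector with image $\cC$ and kernel $\cC'$, hence coincides with $E$ by the uniqueness of the projector attached to a given complement. Thus $E$ commutes with every $P_g$, that is $G\leq\bC_{\sym(\Omega)}(E)$.

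Finally I would assemble the structural facts. Since $E\in M_\Omega(\F_q)$, its centralizer $\bC_{\sym(\Omega)}(E)$ is $2$-closed. As $G\mapsto G^{(2)}$ is a monotone idempotent closure operator, the inclusion $G\leq\bC_{\sym(\Omega)}(E)$ yields $G^{(2)}\leq\bC_{\sym(\Omega)}(E)^{(2)}=\bC_{\sym(\Omega)}(E)$. Proposition~\ref{commute} gives $\bC_{\sym(\Omega)}(E)\leq\paut(\cC)$, and chaining the two inclusions produces $G^{(2)}\leq\paut(\cC)$, as claimed. The whole difficulty is thus concentrated in the Maschke step; the rest reduces to the uniqueness of the projector and to the closure-operator formalities for $G^{(2)}$.
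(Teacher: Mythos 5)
Your proposal is correct and follows exactly the paper's argument: Maschke's theorem yields a $G$-invariant complement $\cC'$, the associated projector $E$ is centralized by $G$, and the $2$-closedness of $\bC_{\sym(\Omega)}(E)$ together with Proposition~\ref{commute} gives $G^{(2)}\leq\bC_{\sym(\Omega)}(E)\leq\paut(\cC)$. The only difference is that you spell out in more detail why $E$ commutes with each $P_g$ (uniqueness of the projector with prescribed image and kernel), which the paper leaves implicit.
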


\begin{proof}
The group algebra $\F_q[G]$ is semisimple by Maschke's Theorem. Therefore each $\F_q[G]$-module
is semisimple too. This implies that each $G$-invariant subspace of $\F_q^\Omega$ has a $G$-invariant
complement. Therefore there exists a $G$-invariant complement $\cC'$ to $\cC$ in $\F_q^\Omega$.
Let $E$ denote a projector on $\cC$ with a kernel $\cC'$. Then $E$ commutes with each $P_g, g\in G$,
or, equivalently,  $G\leq \bC_{\sym(\Omega)}(E)\leq\paut(\cC)$. Since $\bC_{\sym(\Omega)}(E)$
is $2$-closed, $G^{(2)}\leq \bC_{\sym(\Omega)}(E)\leq\paut(\cC)$.\QED
\end{proof}
By Exercise 5.28 \cite{wi2} a $2$-closure of a $p$-group is a $p$-group. This gives us the following
\begin{cor} Each Sylow $r$-subgroup of $\paut(\cC)$, $r\neq{\sf char}(\F_q)$ is $2$-closed.
\end{cor}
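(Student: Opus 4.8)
The plan is to combine the theorem just proved---that $G^{(2)}\leq\paut(\cC)$ whenever $G\leq\paut(\cC)$ has order coprime to $q$---with the quoted fact from Exercise 5.28 of \cite{wi2} that the $2$-closure of an $r$-group is again an $r$-group, and then to close the argument with the maximality of a Sylow subgroup. The whole content is carried by these two imported results; the corollary is essentially the observation that they are compatible.

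First I would fix a Sylow $r$-subgroup $P\leq\paut(\cC)$ with $r\neq{\sf char}(\F_q)$. Writing $q=p^m$ with $p={\sf char}(\F_q)$, the order $|P|$ is a power of $r\neq p$ and hence coprime to $q$. Thus $P$ satisfies the hypothesis of the preceding theorem, which yields $P^{(2)}\leq\paut(\cC)$. Next I would apply Exercise 5.28 of \cite{wi2} with the prime $r$: since $P$ is an $r$-group, its $2$-closure $P^{(2)}$ is again an $r$-group. Consequently $P^{(2)}$ is an $r$-subgroup of $\paut(\cC)$.

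Finally, because the $2$-closure operator is extensive we always have $P\leq P^{(2)}$, so $P^{(2)}$ is an $r$-subgroup of $\paut(\cC)$ that contains the Sylow $r$-subgroup $P$. By maximality of $P$ among $r$-subgroups this forces $P=P^{(2)}$, i.e.\ $P$ is $2$-closed. There is no genuine obstacle in this argument: the only point requiring attention is that the coprime-order theorem keeps $P^{(2)}$ inside $\paut(\cC)$ \emph{while} the exercise simultaneously keeps $P^{(2)}$ an $r$-group, and once both hold the Sylow maximality finishes the proof immediately.
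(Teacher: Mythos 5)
Your proof is correct and is precisely the argument the paper intends: the paper leaves the corollary's proof implicit, merely citing Exercise 5.28 of \cite{wi2} after the coprime-order theorem, and your write-up supplies exactly the missing steps (coprimality of $|P|$ to $q$, containment $P^{(2)}\leq\paut(\cC)$, $P^{(2)}$ an $r$-group, Sylow maximality). No gaps.
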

\subsection{Fusion control}
Let $X\leq Y\leq Z\leq \sym(\Omega)$ be arbitrary subgroups. Following \cite{laue} we say that $Y$ \new{controls fusion of $X$ in $Z$} if for any $g\in\sym(\Omega)$ the following implication holds
$$
X^g\leq Z\implies\exists_{z\in Z}\ X^{gz}\leq Y.
$$
In this case we write $Y\prec_X Z$. If $X$ is a regular subgroup of $\sym(\Omega)$, then the inequality $Y\prec_X Z$ means that for any regular subgroup $X'\leq Z$ isomorphic to $X$ there exists $z\in Z$ such that $X'^z\leq Y$.

The following properties of the relation $\prec_X$ are straightforward:
\begin{enumerate}
\item[(a)] $\prec_X$ is a transitive relation on a set of all overgroup of $X$ in $\sym(\Omega)$;

\item[(b)] if $Y\prec_X Z$ and $Y\leq W\leq Z$, then $W\prec_X Z$.
\end{enumerate}

The statement below is a direct generalization of Lemma 3.1 from \cite{babai}.
\begin{thm}\label{ssets} Let $K,L$ be two Cayley objects over $H$.
If $\aut(K)\prec_{H_R}\aut(L)$, then each solving set for $K$ is a solving set for $L$. In particular, if $K$ is  a CI-object over $H$, then so does $L$.
\end{thm}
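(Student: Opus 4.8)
The plan is to first strip the statement of all combinatorial content and reformulate ``solving set'' in purely group-theoretic terms, since only then can a single set $S$ be meaningfully compared for two objects $K$ and $L$ of possibly different type (here a colored digraph and a code). Fix a class of Cayley objects that is closed under $\sym(H)$. For a Cayley object $K$ (so $H_R\le\aut(K)$) and an object $K'$ in the class with $K'\cong K$, the set of isomorphisms $\{g\in\sym(H):K'^{g}=K\}$ is a single coset $\phi\,\aut(K)$, where $\phi$ is any one solution; moreover $K'=K^{\phi^{-1}}$ is again a Cayley object precisely when $H_R^{\phi}\le\aut(K)$, and every such $\phi$ arises from some $K'$ in the class. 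Consequently I would record the characterization
\[
S \text{ is a solving set for } K \iff \big(H_R^{\phi}\le\aut(K)\ \Longrightarrow\ S\cap\phi\,\aut(K)\neq\emptyset\big)\ \text{ for all } \phi\in\sym(H).
\]
The point of this reformulation is that the right-hand side depends only on the permutation groups $H_R\le\aut(K)\le\sym(H)$ and not on the nature of $K$; establishing it cleanly, with the correct left/right coset bookkeeping, is the conceptual heart of the argument.

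With the characterization in hand the remaining step is a short coset chase. Recall that $\aut(K)\prec_{H_R}\aut(L)$ presupposes $H_R\le\aut(K)\le\aut(L)$ and asserts that $H_R^{g}\le\aut(L)$ implies $H_R^{gz}\le\aut(K)$ for some $z\in\aut(L)$. So let $S$ be a solving set for $K$ and take any $\psi\in\sym(H)$ with $H_R^{\psi}\le\aut(L)$; I must produce an element of $S\cap\psi\,\aut(L)$. Applying fusion control to $g=\psi$ yields $z\in\aut(L)$ with $H_R^{\psi z}\le\aut(K)$. Setting $\phi:=\psi z$ and invoking the characterization for $K$ gives some $s\in S\cap\phi\,\aut(K)$, say $s=\phi a$ with $a\in\aut(K)$. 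Since $z\in\aut(L)$ and $a\in\aut(K)\le\aut(L)$, we get $s=\psi(za)\in\psi\,\aut(L)$, hence $S\cap\psi\,\aut(L)\neq\emptyset$. By the characterization applied to $L$, the set $S$ is a solving set for $L$.

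For the final assertion I would use that $K$ is a CI-object exactly when it admits a solving set contained in the normalizer $\bN_{\sym(H)}(H_R)$, i.e.\ in the holomorph of $H$: such a set forces every isomorphism to be correctable to a Cayley isomorphism, which is precisely the CI-property. This is the content of Babai's Lemma~3.1, of which the theorem is the announced generalization. Taking such an $S$ for $K$, the argument above shows that $S$ is also a solving set for $L$, and since the inclusion $S\subseteq\bN_{\sym(H)}(H_R)$ is unchanged, $L$ is a CI-object as well.

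I expect the only genuine obstacle to lie in the first paragraph: pinning down the coset description of the transporter and verifying that the solving-set property is really a statement about the inclusion $H_R\le\aut(K)$ alone. Once that is in place, the transfer from $K$ to $L$ is the formal manipulation above, which merely glues the defining implication of $\prec_{H_R}$ to the inclusion $\aut(K)\le\aut(L)$.
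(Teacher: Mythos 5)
Your proof is correct and follows essentially the same route as the paper: one application of fusion control, one application of the solving-set property of $K$, and the inclusion $\aut(K)\le\aut(L)$ to transport the resulting permutation back to $L$. Your coset characterization of a solving set is just an abstract restatement of the object-level chase the paper performs directly (with the harmless opposite convention on the direction in which $s$ transports the objects), so the two arguments coincide step for step.
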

\begin{proof} Let $S$ be a solving set for $K$. Pick an arbitrary Cayley object over $H$, say $L'$ isomorphic to $L$. Then $L'=L^g$ for some $g\in\sym(\Omega)$ and, consequently, $\aut(L')=\aut(L)^g$.
Therefore $H_R\leq \aut(L)^g$ implying $H_R^{g^{-1}}\leq \aut(L)$.
By the assumption there exists $z\in\aut(L)$ such that $H_R^{g^{-1}z^{-1}}\leq \aut(K)$. This implies that $H_R\leq \aut(K)^{zg} = \aut(K^{zg})$. Thus $K^{zg}$ is a Cayley object over $H$ isomorphic to $K$. Therefore $K^{zg} = K^s$ for some $s\in S$. Since
$zgs^{-1}\in\aut(K)\leq \aut(L)$, we conclude that $L^{zgs^{-1}}=L$, or, equivalenly, $L^{zg} = L^s$. Together with $z\in\aut(L)$ and $L^g=L'$ we obtain $L' = L^s$.\QED
\end{proof}

{\sc Proof of Theorem~\ref{main1}.} Let $P$ be a Sylow $p$-subgroup of $\paut(I)$ containing $H_R$. By Sylow's theorems $P\prec_{H_R}\paut(I)$.

Since $\F_q[P]$ is semisimple, there exists an $\F_q[P]$-invariant complement to $I$ in $\F_q[H]$, say $J$. Let $E$ be a projection on $I$ parallel to $J$. Since $E$ commutes with all permutations from $P$, it also commutes with $H_R$. Therefore $E$ is an $H$-matrix, that is $E =A_H(e)$ for some $e\in\F_q[H]$. It follows from ${\sf Im}(E) = I$ that $e\F_q[H] = I$. An equality $E^2 = E$ implies that $e$ is an idempotent. Since $P$ centralizes $A_H(e)$ and
$\bC_{\sym(H)}(A_H(e))\leq\paut(I)$, we obtain $P\leq  \bC_{\sym(H)}(A_H(e))\leq\paut(I)$. Therefore $\bC_{\sym(H)}(A_H(e))\prec_{H_R}\paut(I)$. 
Since $A_H(e)$ is the adjacency matrix of a colored Cayley graph $\cay(H,e)$, we conclude that $\bC_{\sym(H)}(A_H(e)) = \aut(\cay(H,e))$.  By Theorem~\ref{ssets} any solving set for a colored Cayley graph $\cay(H,e)$ is a solving set for a code $I$.\QED

 Notice that if $H$ is commutative, then an idempotent $e$ is unique. In the case of non-commutative $H$ a right ideal of $\F_q[H]$ may have more than one generating idempotent.

{\sc Proof of Theorem~\ref{main2}.}
By Theorem 1.8 \cite{mu99} there exists a solvable group $F$, $H_R\leq F\leq \paut(I)$ which
controls fusion of $H_R$ in $\paut(I)$. Let $\pi$ be the set of all prime divisors of $n$.
It follows from Hall's theorems that every Hall $\pi$-subgroup $F_\pi\leq F$ which contains $H_R$
controls fusion of $H_R$ in $F$. By transitivity of $\prec_{H_R}$ the group $F_\pi$ controls
fusion of $H_R$ in $\paut(I)$. Since ${\sf char}(\F_q)$ is coprime to $|F_\pi|$, there exists
a $F_\pi$-invariant complement $J$ to $I$ in $\F_q[H]$. Let $E$ denote a projector onto $I$ parallel
to $J$. Then $F_\pi\leq \bC_{\sym(H)}(E)\leq \paut(I)$ implying $\bC_{\sym(H)}(E)\prec_{H_R} \paut(I)$. Since
$H_R\leq \bC_{\sym(H)}(E)$, the matrix $E$ is circulant, that is $E=A_H(e)$ for some $e\in\F_q[H]$. It follows from $E^2=E$ and ${\sf Im}(E)=I$
that $e$ is an idempotent generating $I$. Thus $\bC_{\sym(H)}(A_H(e)) = \aut(\cay(H,e))$ controls fusion of $H_R$ in $\paut(I)$. By Theorem~\ref{ssets} any solving set for $\cay(H,e)$ is a solving set for $I$.\QED

\section{Acknowledgements}
The author is very grateful to I. Ponomarenko and M. Klin for fruitful discussions and valuable remarks.
\bibstyle{plain}

\end{document}